
\documentstyle[amsfonts,amstex]{article}

\newtheorem{ftheor}{Th\'{e}or\`{e}me}[section]
\newtheorem{fprop}{Proposition}[section]
\newtheorem{fcor}{Corollaire}[section]
\newtheorem{theor}{Theorem}[section]
\newtheorem{prop}{Proposition}[section]
\newtheorem{cor}{Corollary}[section]
\newtheorem{proof}{Proof}
\textheight=205mm
\textwidth =150 mm

\begin{document}

\author{Paul POPESCU et Marcela POPESCU, \and {\small D\'{e}partement de
Mathematiques Appliqu\'{e}es, Universit\'{e} de Craiova, } \and {\small rue
Al.Cuza, No.13, 200585 Craiova, Roumanie} \and Courriel{\small : }{\small %
paul\_p\_popescu(at)yahoo.com, marcelacpopescu(at)yahoo.com}}
\title{Foliated vector bundles and riemannian foliations \\
Fibr\'{e}s vectoriels feuill\'{e}t\'{e}s et feuilletages riemanniens
\thanks{%
The paper is published in C. R. Acad. Sci. Paris, Ser. I 349 (2011) 445--449. The
original publication is available at http://www.elsevier.com/}
}
\date{}
\maketitle

\begin{abstract}
The purpose of this Note is to prove that each of the following conditions
is equivalent to that of the foliation ${\cal F}$ is riemannian: 1) the
lifted foliation ${\cal F}^{r}$ on the bundle of $r$-transverse jets is
riemannian for an $r\geq 1$; 2) the foliation ${\cal F}_{0}^{r}$ on the
slashed ${\cal J}_{0}^{r}$ is riemannian and vertically exact for an $r\geq
1 $; 3) there is a positively admissible transverse lagrangian on ${\cal J}%
_{0}^{r}E$, the $r$-transverse slashed jet bundle of a foliated bundle $%
E\rightarrow M$, for an $r\geq 1$.
\end{abstract}

\renewcommand{\abstractname}{R\'esum\'e}

\begin{abstract}
Le but de cette Note est de d\'{e}montrer que chacune des conditions
suivantes est \'{e}quivalente \`{a} celle qu'un feuilletage ${\cal F}$ soit
riemannien: 1) le feuilletage \'{e}lev\'{e} ${\cal F}^{r}$ sur l'espace de
jets $r$-transverses est riemannien pour un certain $r\geq 1;$ 2) le
feuilletage \'{e}lev\'{e} ${\cal F}_{0}^{r}$ sur l'espace r\'{e}duit des
jets $r$-transverses est riemannien et verticalement exact pour un certain $%
r\geq 1$; 3) il existe un lagrangien positif, admissible et transvers sur $%
{\cal J}_{0}^{r}E$, le fibr\'{e} r\'{e}duit des jets $r$-transverses d'un
fibr\'{e} vectoriel feuillet\'{e} $E\rightarrow M$, pour un certain $r\geq 1$%
.
\end{abstract}


Soit ${\cal F}$ un feuilletage de dimension $k$ sur une vari\'{e}t\'{e} $M$.
Un fibr\'{e} $p:E\rightarrow M$ est {\em feuillet\'{e}} s'il y a un atlas
fibr\'{e} tel que les fonctions structurales sont basiques. Il y a aussi un
feuilletage ${\cal F}_{E}$ sur $E$ qui a la m\^{e}me dimension $k$, tel que $%
p$ restrictionn\'{e} \`{a} chaque feuille $F_{E}$ de ${\cal F}_{E}$ est un
diff\'{e}omrphisme locale sur une feuille $F$ de ${\cal F}$. Dans la Note on
utilise principalement des fibr\'{e}s feuillet\'{e}s qui sont affines ou
vectoriels.

Dans \cite[Definition 1.1]{Ta01} on dit qu'un feuilletage ${\cal F}$ est de 
{\em type fini} s'il existe $r\geq 1$ tel que ${\cal F}^{r}$ est
transversalement parall\'{e}lisable. Si de plus, toutes les feuilles de $%
{\cal F}^{r}$ sont relativement compactes alors on dit que ${\cal F}$ est de 
{\em type fini compact}. Aussi dans \cite[Th\'{e}or\`{e}me 1.2.]{Ta01}
prouve-t-on qu'{\em un feuilletage de type fini compact est riemannien}.
Comme un feuilletage transversalement parall\'{e}lisable est riemannien, le r%
\'{e}sultat de Tarquini est am\'{e}lior\'{e} par le r\'{e}sultat qui suit.

\begin{ftheor}
\label{thm:22 copy(1)} Un feuilletage ${\cal F}^{r}$ est riemannien pour un
certain $r\geq 1$ si est seulement si ${\cal F}$ est un feuilletage
riemannien.
\end{ftheor}

Pour le feuilletage induit ${\cal F}_{0}^{r}$ sur le fibr\'{e} vectoriel r%
\'{e}duit ${\cal J}_{\ast }^{r}={\cal J}^{r}\backslash \{\bar{0}\}$, ce Th%
\'{e}or\`{e}me ne peut donner aucune r\'{e}ponse \`{a} la question suivante: 
{\em quand le feuilletage }$F$ {\em est-il\ riemannien, si }${\cal F}%
_{0}^{r} ${\em \ est riemannien pour un certain }$r\geq 1$?

Soit $p:E\rightarrow M$ un fibr\'{e} vectoriel feuillet\'{e}. Un {\em %
lagrangien positif et admissible }sur $E$ c'est une application continue $%
L:E\rightarrow I\!\!R$ dont la r\'{e}striction au fibr\'{e} r\'{e}duit $%
E_{\ast }=E\backslash \{\bar{0}\}\rightarrow M$ (o\`{u} $\{\bar{0}\}$ est
l'image de la section nulle) est diff\'{e}rentiable et il satisfait aux
conditions suivantes: 1) $L$ est d\'{e}fini positif (c'est-\`{a}-dire que la
forme hessienne verticale est d\'{e}finie positive) et $L(x,y)\geq 0=L(x,0)$%
, $(\forall )x\in M$, $y\in E_{x}=p^{-1}(x)$; 2) $L$ est localement
projetable sur un lagrangien transverse $\bar{L}$; 3) il existe une fonction
basique $\varphi :M\rightarrow (0,\infty )$, tel que $(\forall )$ $x\in M$,
il y a au moin un $y\in E_{x}$ de fa\c{c}on que $L(x,y)=\varphi (x)$. Un 
{\em finslerien} est un lagrangien qui est $2$--homog\`{e}ne; s'il est
positif, alors il est toujours admissible. Le fibr\'{e} vertical $VTE=\ker
p_{\ast }\rightarrow E$ peut \^{e}tre consider\'{e} un sous-fibr\'{e}
vectoriel de $\nu F_{E}\rightarrow E$ par la projection canonique $%
TE\rightarrow \nu F_{E}$, puisque $VTE$ est transverse \`{a} $\tau F_{E}$.
On dit qu'une m\'{e}trique riemannienne invariante $G^{\prime }$ sur $\nu
F_{E}$ est {\em verticalement exacte} si sa restriction $G$ aux sections
verticales transverses c'est justement la forme hessienne verticale d'un
lagrangien positif et admissible $L:E\rightarrow I\!\!R$; dans ces
conditions, on dit aussi que le feuilletage ${\cal F}_{E}$ est {\em %
verticalement exact}. A noter que pour un fibr\'{e} affine $p:E\rightarrow M$%
, la hessienne verticale d'un lagrangien $L:E\rightarrow I\!\!R$ est une
forme bilin\'{e}aire sur les fibres du fibr\'{e} vertical $VTE\rightarrow E$%
, d\'{e}finie par les d\'{e}riv\'{e}es partielles de second ordre de $L$, en
utilisant des coordonn\'{e}es sur fibres (v. \cite{MP09F}, pour d\'{e}tails).

\begin{ftheor}
\label{thm:22-1 copy(1)} Soit ${\cal F}$ un feuilletage sur la vari\'{e}t%
\'{e} $M$ et soit ${\cal F}_{0}^{r}$ le feuilletage relev\'{e} sur le fibr%
\'{e} r\'{e}duit ${\cal J}_{0}^{r}$ des jets d'ordre $r$ transverses du fibr%
\'{e} normal $\nu {\cal F}$. Alors ${\cal F}_{0}^{r}$ est riemannien et
verticalement exact, pour un certain $r\geq 1$, si et seulement si ${\cal F}$
est riemannien.
\end{ftheor}

Pour d\'{e}montrer la condition suffisante, on utilise le r\'{e}sultat
suivant.

\begin{fprop}
\label{pr0 copy(1)}Une m\'{e}trique invariante $g$ sur $\nu F$ donne
canoniquement une m\'{e}trique invariante sur $\nu F^{r}$ qui est
verticalement exacte, pour un certain $r\geq 1$.
\end{fprop}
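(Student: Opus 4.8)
The plan is to build, out of $g$ alone, first a positive admissible Lagrangian $L$ on ${\cal J}^{r}$ and then an invariant metric $G'$ on $\nu{\cal F}^{r}$ whose restriction to the vertical transverse sections is exactly the vertical Hessian of $L$; this will be carried out for every $r\ge 1$.

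\textbf{From $g$ to a Lagrangian.} Since $g$ is invariant it is holonomy invariant, so there is a canonical torsion--free metric transverse connection $\nabla$ on $\nu{\cal F}$ (the transverse Levi--Civita connection); $\nabla$ is \emph{foliated}, in a foliated chart its coefficients are basic, and along the leaves it restricts to the flat partial Bott connection of the foliated structure. Using $\nabla$ to split the jet filtration one identifies the foliated bundle ${\cal J}^{r}\to M$ with $\bigoplus_{i=0}^{r}S^{i}(\nu^{*}{\cal F})\otimes\nu{\cal F}$, again as foliated bundles over $M$. The metric $g$ induces a metric on each summand; let $h$ be their orthogonal direct sum and set $L:{\cal J}^{r}\to I\!\!R$, $L(\xi)=\frac12\,h(\xi,\xi)$. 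Then $L$ is smooth, $2$--homogeneous and positive definite, hence a positive admissible Lagrangian: condition~2 (local projectability onto a transverse Lagrangian $\bar L$) holds because $h$ is ${\cal F}^{r}$--basic, being assembled from the basic data $g$ and $\nabla$, and condition~3 holds with $\varphi\equiv 1$, which is attained on the nonempty fibre sphere $\{h=2\}$. By construction the vertical Hessian of $L$, regarded as a fibre metric on $VT{\cal J}^{r}=p^{*}{\cal J}^{r}$, is precisely $h$.

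\textbf{Extending to $\nu{\cal F}^{r}$.} The lifted foliation ${\cal F}^{r}$ has $T{\cal F}^{r}$ equal to the $\nabla$--horizontal lift of $T{\cal F}$, so $T{\cal F}^{r}$ is contained in the $\nabla$--horizontal complement $H$ of $VT{\cal J}^{r}$ in $T{\cal J}^{r}$; hence $\nu{\cal F}^{r}\cong VT{\cal J}^{r}\oplus p^{*}\nu{\cal F}$ as foliated bundles over ${\cal J}^{r}$. Define $G':=h\oplus p^{*}g$ in this splitting. Its restriction to the vertical transverse sections $VT{\cal J}^{r}$ is $h$, that is, the vertical Hessian of $L$, so $G'$ is vertically exact. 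It is invariant because the splitting, $h$ and $p^{*}g$ are all ${\cal F}^{r}$--basic: $g$ is ${\cal F}$--invariant, $\nabla$ is foliated, and $p$ maps leaves of ${\cal F}^{r}$ onto leaves of ${\cal F}$. Since the only ingredient used is $g$ (which determines $\nabla$ canonically), the assignment $g\mapsto G'$ is canonical.

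The main obstacle is the transversality bookkeeping underlying the two steps above: one must check, working in a foliated atlas with basic structural functions, that the transverse Levi--Civita connection of $g$ is genuinely projectable, that it extends the Bott partial connection (so that its horizontal lift recovers $T{\cal F}^{r}$), and that the induced fibre metric $h$ and the splitting of $\nu{\cal F}^{r}$ inherit this basic character. This single fact is what simultaneously yields the local projectability of $L$ required in condition~2 and the invariance of $G'$; everything else reduces to routine linear algebra on the fibres.
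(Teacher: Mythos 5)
Your proposal is correct and follows essentially the same route as the paper: both use the (transverse) Levi--Civita connection of $g$ to produce an invariant fibre metric on ${\cal J}^{r}$ together with a connection-induced splitting $\nu {\cal F}^{r}\cong V\oplus p^{\ast }\nu {\cal F}$, then take the direct sum with $p^{\ast }g$ and observe that the vertical summand is the Hessian of the associated quadratic Lagrangian. The only inessential difference is that you obtain the fibre metric in one step via the splitting ${\cal J}^{r}\cong \bigoplus_{i=0}^{r}S^{i}(\nu ^{\ast }{\cal F})\otimes \nu {\cal F}$, whereas the paper constructs it inductively through the inclusions ${\cal J}^{r+1}\subset J^{1}{\cal J}^{r}$.
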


En particulier, une m\'{e}trique invariante $g$ sur $\nu F$ produit un
lagrangien canonique sur ${\cal J}^{r}$, qui provient de la partie v\'{e}%
rticale de la m\'{e}trique verticalement exacte sur $\nu F^{r}$. On peut se
demander si la r\'{e}ciproque est aussi vraie: {\em est-ce que l'existence
d'un lagrangien sur }$J^{r}${\em \ assure le fait que }$F${\em \ est
riemannen}?

\begin{ftheor}
\label{thm:21 copy(1)} Soit $p:E\rightarrow M$ un fibr\'{e} vectoriel
feuillet\'{e} sur la vari\'{e}t\'{e} feuillet\'{e} $(M,{\cal F})$. Il existe
un lagrangien transverse, positif et admissible sur ${\cal J}^{r}E$, pour un
certain $r\geq 1$, si et seulement si le feuilletage ${\cal F}$ est
riemannien.
\end{ftheor}

Le principal outil technique pour prouver la n\'{e}cessit\'{e} des Th\'{e}or%
\`{e}mes \ref{thm:22-1 copy(1)} et \ref{thm:21 copy(1)} ci-dessus est d'un
int\'{e}r\^{e}t particulier, comme il suit.

\begin{fprop}
\label{prop:23 copy(1)} Soit $p_{1}:E_{1}\rightarrow M$ et $%
p_{2}:E_{2}\rightarrow M$ deux fibr\'{e}s vectoriels feuillet\'{e}s sur la
vari\'{e}t\'{e}e feuil\'{e}t\'{e}e $(M,{\cal F})$ et soit $q_{2}:E_{2\ast
}\rightarrow M$ le fibr\'{e} r\'{e}duit. S'il y a un lagrangien transverse,
positif et admissible $L:E_{2}\rightarrow I\!\!R$ et une m\'{e}trique $b$
sur le fibr\'{e} induit $q_{2}^{\ast }E_{1}\rightarrow E_{2\ast }$, qui este
transverse \`{a} l'\'{e}gard de${\cal F}_{E_{2\ast }}$, alors il y a une m%
\'{e}trique sur $E_{1}$ qui este feuillet\'{e}e \`{a} l'\'{e}gard de ${\cal F%
}$.
\end{fprop}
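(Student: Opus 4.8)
\medskip
\noindent\emph{Esquisse de d\'{e}monstration.} Le plan est de fabriquer la m\'{e}trique feuillet\'{e}e cherch\'{e}e sur $E_{1}$ en moyennant la m\'{e}trique transverse $b$ le long de fibres \emph{compactes} extraites de $E_{2\ast}$ gr\^{a}ce au lagrangien $L$, la hessienne verticale de $L$ fournissant sur ces fibres une mesure invariante par l'holonomie.

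On commencerait par introduire l'\emph{indicatrice}
\[
\Sigma=\{(x,y)\in E_{2}\ :\ L(x,y)=\varphi(x)\}\subset E_{2\ast},
\]
o\`{u} $\varphi:M\to(0,\infty)$ est la fonction basique fournie par la condition d'admissibilit\'{e} (l'inclusion dans $E_{2\ast}$ provient de $\varphi>0$ et $L|_{\bar{0}}=0$, de sorte que $L$ y est diff\'{e}rentiable). Le long de tout rayon issu de l'origine de $E_{2,x}$, la fonction $t\mapsto L(x,ty)$ cro\^{\i}t strictement de $0$ vers $+\infty$ --- cons\'{e}quence du caract\`{e}re d\'{e}fini positif de la hessienne verticale, joint \`{a} $L\geq 0$ et $L|_{\bar{0}}=0$ ---; donc la diff\'{e}rentielle verticale $d^{V}L$ ne s'annule pas sur $E_{2\ast}$, $\Sigma$ est une hypersurface lisse, $q_{2}|_{\Sigma}:\Sigma\to M$ une submersion, et chaque fibre $\Sigma_{x}=\{y\in E_{2,x}:L(x,y)=\varphi(x)\}$ rencontre chaque rayon en exactement un point: c'est une hypersurface \emph{compacte} de $E_{2,x}$, diff\'{e}omorphe \`{a} une sph\`{e}re, et $q_{2}|_{\Sigma}$ est un fibr\'{e} localement trivial \`{a} fibre compacte. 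Comme $L$ se projette localement sur un lagrangien transverse $\bar{L}$ (donc $L$ est constant le long des feuilles de ${\cal F}_{E_{2}}$) et que $\varphi$ est basique, $\Sigma$ est satur\'{e} pour ${\cal F}_{E_{2\ast}}$; l'holonomie de ${\cal F}_{E_{2}}$ agissant lin\'{e}airement sur les fibres de $E_{2}$ (fibr\'{e} vectoriel feuillet\'{e}) et pr\'{e}servant les niveaux de $L$, les isomorphismes d'holonomie $\tau_{2}:E_{2,x}\to E_{2,x'}$ le long d'un chemin feuillet\'{e} envoient $\Sigma_{x}$ sur $\Sigma_{x'}$. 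Ainsi ${\cal F}_{E_{2\ast}}$ induit un feuilletage sur $\Sigma$, et $\Sigma\to M$ est un fibr\'{e} feuillet\'{e} \`{a} fibre compacte.

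On munirait ensuite chaque $\Sigma_{x}$ de la forme volume riemannienne $\mu_{x}$ associ\'{e}e \`{a} la restriction, aux espaces tangents des fibres $\Sigma_{x}$, de la hessienne verticale $g^{L}$ de $L$. Comme $L$ est constant le long des feuilles, $g^{L}$ est invariant par l'holonomie de ${\cal F}_{E_{2\ast}}$, donc les restrictions $\tau_{2}|_{\Sigma_{x}}:\Sigma_{x}\to\Sigma_{x'}$ sont des isom\'{e}tries pour ces m\'{e}triques et $(\tau_{2})_{\ast}\mu_{x}=\mu_{x'}$. On poserait alors, pour $x\in M$ et $u,w\in E_{1,x}$ (la fibre $(q_{2}^{\ast}E_{1})_{(x,y)}$ s'identifiant canoniquement \`{a} $E_{1,x}$),
\[
G_{x}(u,w)=\int_{\Sigma_{x}}b_{(x,y)}(u,w)\,d\mu_{x}(y).
\]
C'est une forme bilin\'{e}aire sym\'{e}trique, d\'{e}finie positive ($b$ l'\'{e}tant, et $\Sigma_{x}$ \'{e}tant non vide et de mesure positive) et d\'{e}pendant diff\'{e}rentiablement de $x$ (int\'{e}gration d'une famille lisse sur un fibr\'{e} \`{a} fibre compacte): $G$ est donc une m\'{e}trique riemannienne sur $E_{1}$. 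Pour v\'{e}rifier qu'elle est feuillet\'{e}e, on observerait que l'holonomie du fibr\'{e} induit $q_{2}^{\ast}E_{1}$, le long du chemin feuillet\'{e} de $(x,y)$ \`{a} $(x',\tau_{2}y)$, n'est autre que l'holonomie $\tau_{1}:E_{1,x}\to E_{1,x'}$ de $E_{1}$ le long de sa projection; l'hypoth\`{e}se que $b$ est transverse donne alors $b_{(x',\tau_{2}y)}(\tau_{1}u,\tau_{1}w)=b_{(x,y)}(u,w)$, et le changement de variable $y'=\tau_{2}y$, avec $(\tau_{2})_{\ast}\mu_{x}=\mu_{x'}$, fournit $G_{x'}(\tau_{1}u,\tau_{1}w)=G_{x}(u,w)$. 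Donc $G$ est invariante par l'holonomie de ${\cal F}$, c'est-\`{a}-dire feuillet\'{e}e.

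La partie d\'{e}licate est la premi\`{e}re \'{e}tape: s'assurer que l'indicatrice $\Sigma_{x}$ est effectivement compacte et que $\Sigma\to M$ est un fibr\'{e} localement trivial ${\cal F}$-satur\'{e}, muni d'une mesure fibr\'{e}e invariante par holonomie --- c'est pr\'{e}cis\'{e}ment l\`{a} qu'interviennent le caract\`{e}re d\'{e}fini positif et admissible de $L$ ainsi que sa projetabilit\'{e} locale. Une fois ce fibr\'{e} auxiliaire construit, la compatibilit\'{e} des holonomies de $E_{1}$, $E_{2}$ et $q_{2}^{\ast}E_{1}$ et le moyennage rel\`{e}vent d'une v\'{e}rification de routine.
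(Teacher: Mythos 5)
Your proof is correct and follows essentially the same strategy as the paper: both construct the foliated metric on $E_{1}$ by averaging the transverse metric $b$ over a compact, holonomy-invariant region of each fibre of $E_{2}$ cut out by $L$ and $\varphi$, using the convexity of the set of positive definite forms. The only difference is technical: the paper integrates over the band $\{e_{2}:\frac{1}{2}\varphi(m)\le L(e_{2})\le\varphi(m)\}$ against Lebesgue measure normalised by its total mass (which absorbs both the scale ambiguity of the Lebesgue measure and the Jacobian of the linear holonomy), whereas you integrate over the indicatrix $\{L=\varphi\}$ against the Riemannian volume of the vertical hessian, which is holonomy-invariant by construction.
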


On peut \'{e}noncer, comme un corollaire, le cas particulier $E_{1}=E_{2}=E$
et $b$ est le hessien d'un lagrangien transverse, positif et admissible $%
L:E\rightarrow I\!\!R$, vue comme une m\'{e}trique sur $p^{\ast }E_{\ast
}\rightarrow E$, o\`{u} $p:E\rightarrow M$ est un fibr\'{e} vectoriel f\'{e}%
uillet\'{e}.

\begin{fcor}
Soit $p:E\rightarrow M$ un fibr\'{e} vectoriel f\'{e}uillet\'{e} sur la vari%
\'{e}t\'{e}e feuil\'{e}t\'{e}e $(M,{\cal F})$. S'il y a un lagrangien
transverse, positif et admissible $L:E\rightarrow I\!\!R$, alors il y a une m%
\'{e}trique feuillet\'{e}e sur $E$.
\end{fcor}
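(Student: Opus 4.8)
The corollary is an immediate special case of Proposition \ref{prop:23 copy(1)}, so the plan is essentially to verify that the hypotheses of that proposition are met when $E_1=E_2=E$ and $b$ is taken to be the vertical Hessian of $L$.

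First I would set $E_1=E_2=E$ and $q_2 = p|_{E_*} : E_* \to M$. The datum $L:E\rightarrow I\!\!R$ is, by hypothesis, a transverse, positive, admissible lagrangian, so the first hypothesis of Proposition \ref{prop:23 copy(1)} is satisfied verbatim. It remains to produce the metric $b$ on the induced bundle $q_2^{\ast}E \to E_*$ that is transverse with respect to ${\cal F}_{E_*}$. The natural candidate is the vertical Hessian of $L$: at a point $(x,y)\in E_*$ the fibre of $q_2^\ast E$ over $(x,y)$ is canonically $E_x$, which is also canonically $V_yTE_x = (VTE)_{(x,y)}$ (the vertical tangent space, since $E_x$ is a vector space), and on this space the second vertical derivatives of $L$ in fibre coordinates define a symmetric bilinear form. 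By condition 1) in the definition of a positive admissible lagrangian this Hessian form is positive definite, hence a genuine fibre metric $b$ on $q_2^\ast E \to E_*$.

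The one point that needs a genuine argument — and which I expect to be the main (though modest) obstacle — is transversality of $b$ with respect to ${\cal F}_{E_*}$: one must check that the vertical Hessian of $L$ is invariant along the leaves of ${\cal F}_{E_*}$, i.e. that it is a transverse (basic) tensor on $q_2^\ast E$. This is exactly where condition 2) in the definition of an admissible lagrangian is used: $L$ is locally projectable onto a transverse lagrangian $\bar L$, and in a foliated fibred atlas the fibre coordinates on $E$ are basic, so the partial derivatives $\partial^2 L/\partial y^i \partial y^j$ depend only on the transverse variables and on the fibre coordinates, not on the leafwise coordinates. Thus the Hessian descends to the transverse jet data and is ${\cal F}_{E_*}$-transverse. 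I would spell this out in one or two lines using the local expression of $L$ in a foliated chart, invoking the naturality discussion of the vertical Hessian for (affine or vector) foliated bundles recalled just before Théorème \ref{thm:22-1 copy(1)} (cf. \cite{MP09F}).

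With $b$ so constructed, Proposition \ref{prop:23 copy(1)} applies directly and yields a metric on $E_1=E$ that is foliated with respect to ${\cal F}$. This is precisely the assertion of the corollary, so the proof is complete. I would close by remarking that no homogeneity of $L$ is needed here — only positivity and admissibility — so in particular the corollary covers the finslerian case mentioned in the text.
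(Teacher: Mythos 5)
Your proof is correct and follows exactly the paper's intended route: the corollary is stated there as the special case of Proposition \ref{prop:23 copy(1)} with $E_{1}=E_{2}=E$ and $b$ the vertical Hessian of $L$ viewed as a metric on $p^{\ast }E_{\ast }\rightarrow E_{\ast }$. Your additional verification that the Hessian is positive definite (condition 1) and basic along the leaves of ${\cal F}_{E_{\ast }}$ (condition 2) is precisely what the paper leaves implicit.
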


Dans le cas particulier o\`{u} $E=\nu F$ et $L$ est la forme quadratique
d'une m\'{e}trique de Finsler feuillet\'{e}e, on peut obtenir qu'un
feuilletage qui a une m\'{e}trique de Finsler transverse soit un feuilletage
riemannien (le probl\`{e}me est propos\'{e} dans \cite{MiMo} comme un cas sp%
\'{e}cial d'un probl\`{e}me propos\'{e} par E. Ghys dans l'Annexe E du livre 
\cite{Mo}; voir \cite{MiMo, JoWo, MP09F}). Un autre cas int\'{e}ressant est
lorsque $E=\nu ^{\ast }F$, sp\'{e}cialement en ce qui concerne la dualit\'{e}
lagrangien - hamiltonien.

Finalement, il est naturel de consid\'{e}rer la question suivante: {\em %
est-ce qu'on peut \'{e}liminer dans l'hypoth\`{e}se du Th\'{e}or\`{e}me \ref%
{thm:22-1 copy(1)} la condition que le feuilletage }${\cal F}_{0}^{r}${\em \
soit verticalement exact?}

\section{Introduction}

Let $M$ be an $n$-dimensional manifold and ${\cal F}$ be a $k$-dimensional
foliation on $M$. We denote the tangent plane field by $\tau F$ and the
normal bundle $\tau M/\tau F$ by $\nu F$. A bundle is called {\em foliated}
if there is an atlas of local trivializations on $E$ such that all the
components of the structural functions are basic ones. In this case a
canonical foliation ${\cal F}_{E}$ on $E$ is induced, having the same
dimension $k$, such that $p$ restricted to leaves is a local diffeomorphism.
In particular, we consider affine and vector bundles that are foliated.
Given a foliated vector bundle, its tensor bundles are foliated vector
bundles. For example, we can consider the transverse vector bundle of
bilinear forms on the fibers of $E$. If $p:E\rightarrow M$ is a foliated
bundle, then ${\cal J}^{1}E\rightarrow M$ is a foliated bundle of $1$-jets
of foliated sections of $E$; a canonical foliation ${\cal F}_{E}^{1}$ on $%
{\cal J}^{1}E$ can be considered. The elements of ${\cal J}^{1}E$ are
equivalence classes $[s]$ of {\em foliated} local sections $s$ of $E$, where
the equivalence relation is coincidence up to order one. The natural
projection $\pi _{0}^{1}:{\cal J}^{1}E\rightarrow E$ is that of an affine
bundle over $E$ with vector space ${\rm Hom}(\nu F,E)$. Indeed, if $(m,e)$
is an element of $E$, the fiber $(\pi _{0}^{1})^{-1}(m,e)$ can be seen as
the affine space of ($k$-dimensional) subspaces $H$ of $T_{(m,e)}E$ such
that $H\cap \ker p_{\ast }=\{0\}$ and $p_{\ast }H\cap \tau F=\{0\}$. So,
there is a free transitive action of ${\rm Hom}(\nu _{m}F,E_{m})$ on the
fiber $(\pi _{0}^{1})^{-1}(m,e)$. In particular, the tangent space to such a
fiber is canonically isomorphic to ${\rm Hom}(\nu _{m}F,E_{m})$. Analogously
one can consider equivalence classes ${\cal J}^{r}E$ of {\em foliated}
sections of $E$, where the equivalence relation is coincidence up to an
order $r\geq 1$; it carries a foliation ${\cal F}_{E}^{r}$. For $r\geq 1$,
the canonical projection $\pi _{r-1}^{r}:{\cal J}^{r}E\rightarrow {\cal J}%
^{r-1}E$ is also an affine bundle, with the director vector bundle ${\rm Hom}%
((\nu F)^{r},E)$). For $r=0$ one obtain a bundle $\pi _{-1}^{r}:{\cal J}%
^{r}E\rightarrow M$. If $p:E\rightarrow M$ is a foliated {\em vector}
bundle, then $\pi _{-1}^{r}:{\cal J}^{r}E\rightarrow M$ is also a foliated
vector bundle and a natural vector subbundle of ${\cal J}^{1}{\cal J}%
^{r-1}E\rightarrow M$, the first jet bundle of $\pi _{-1}^{r-1}:{\cal J}%
^{r-1}E\rightarrow M$. Details can be found, for example, in \cite{GMS}. The
foliated translation is similarly to the setting used in \cite{Ta01}, where
the foliated vector bundle $\pi :\nu F\rightarrow M$ is considered. In this
case, for sake of simplicity, we denote below ${\cal J}^{r}\nu {\cal F}$ by $%
{\cal J}^{r}$ and the lifted foliation on ${\cal J}^{r}$ by ${\cal F}^{r}$.
According to \cite[Definition 1.1]{Ta01}, a foliation ${\cal F}$ is called
of {\em finite type} if there exists $r\geq 1$ such that ${\cal F}^{r}$ is
transversely parallelizable. If moreover all the leaves of ${\cal F}^{r}$
are relatively compact, then ${\cal F}$ is called a {\em compact finite type
foliation}. In \cite[Theorem 1.2.]{Ta01} it is proved that {\em any compact
finite type foliation is riemannian}. Since a transversely parallelizable
foliation is a Riemannian one, the following result improves the result of
Tarquini.

\begin{theor}
\label{thm:22} The lifted foliation ${\cal F}^{r}$ is riemannian for some $%
r\geq 1$ iff ${\cal F}$ is riemannian.
\end{theor}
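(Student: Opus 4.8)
The plan is to prove the two implications separately, the forward direction being the substantial one. For the easy direction, suppose $\mathcal{F}$ is riemannian, so there is a bundle-like metric, equivalently a holonomy-invariant metric $g$ on $\nu F$. The jet bundle $\mathcal{J}^{r} \to M$ is a foliated vector bundle, and $\nu F^{r}$ splits (non-canonically, but foliatedly) into a sum of pullbacks of tensor bundles built from $\nu F$ and $\tau F^{*}$-type pieces; more intrinsically, an invariant metric on $\nu F$ induces by the standard tensorial constructions an invariant metric on each $\mathrm{Hom}((\nu F)^{\otimes j}, \nu F)$, hence on $\nu F^{r}$. I would phrase this as: the transverse parallelism data is functorial, so holonomy-invariance of $g$ propagates to the induced transverse metric on $\mathcal{J}^{r}$, making $\mathcal{F}^{r}$ riemannian. (This is essentially Proposition \ref{pr0 copy(1)} without the vertical exactness refinement, which is not needed here.)

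For the converse — $\mathcal{F}^{r}$ riemannian $\Rightarrow \mathcal{F}$ riemannian — the key observation is that the zero section $\bar 0 : M \to \mathcal{J}^{r}$ is a foliated embedding whose image is a union of leaves' worth of transversals: more precisely, $\bar0$ maps $M$ into $\mathcal{J}^{r}$ in such a way that $\bar0_{*}(\tau F) = \tau F^{r}|_{\bar0(M)}$ and the normal bundle $\nu F^{r}$ restricted along $\bar0(M)$ contains $\nu F$ as a natural invariant subbundle — indeed $\nu F^{r}|_{\bar0(M)} \cong \nu F \oplus \mathrm{Hom}(\nu F, \nu F) \oplus \cdots \oplus \mathrm{Hom}((\nu F)^{r}, \nu F)$ as invariant bundles, with $\nu F$ the first summand. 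So if $G^{r}$ is a holonomy-invariant metric on $\nu F^{r}$, then its restriction $G^{r}|_{\bar0(M)}$ is holonomy-invariant for the restricted holonomy action, and projecting/restricting to the $\nu F$-summand yields a holonomy-invariant metric on $\nu F$. The point to check carefully is that the holonomy of $\mathcal{F}$ acting on $\nu F$ is exactly the restriction, along $\bar 0$, of the holonomy of $\mathcal{F}^{r}$ acting on $\nu F^{r}$ — this is immediate from the fact that $\bar0$ is a foliated section and the lifted foliation's holonomy is the jet-prolongation of the base holonomy, which acts on the zero section by the tangent (linear) part, i.e. by the original holonomy on the $\nu F$ factor.

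I would organize the converse as follows: (i) recall that $\mathcal{F}^{r}$ riemannian is equivalent to the existence of a holonomy-invariant metric $G^{r}$ on $\nu F^{r}$; (ii) restrict $G^{r}$ along the foliated zero-section embedding $\bar 0$, noting $\bar 0(M)$ is saturated-compatible so the restriction is $\mathcal{F}$-holonomy-invariant on $\nu F^{r}|_{\bar0(M)}$; (iii) identify the canonical invariant direct-summand decomposition of $\nu F^{r}|_{\bar0(M)}$ and take the component metric on the $\nu F$-summand; (iv) conclude this component is a holonomy-invariant metric on $\nu F$, hence $\mathcal{F}$ is riemannian.

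The main obstacle, and the step deserving the most care, is (iii): verifying that the decomposition $\nu F^{r}|_{\bar0(M)} \cong \bigoplus_{j=0}^{r} \mathrm{Hom}((\nu F)^{j}, \nu F)$ is genuinely invariant under the holonomy of $\mathcal{F}^{r}$ — a priori the holonomy could mix the summands. The resolution is that along the zero section the affine structures of the successive projections $\pi^{j}_{j-1}$ linearize, and the lifted holonomy respects the flag $\mathcal{J}^{r} \supset \mathcal{J}^{r-1} \supset \cdots$ together with its associated-graded, acting on each graded piece $\mathrm{Hom}((\nu F)^{j}, \nu F)$ by the tensorial prolongation of the base holonomy; so even if the filtration is not split invariantly in general, the \emph{lowest} piece $\nu F$ (the image of $(\pi^{r}_{-1})_{*}$ composed with projection to $\nu F$) is an invariant quotient, and a holonomy-invariant metric on $\nu F^{r}$ restricts to one on any invariant quotient. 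Using the quotient rather than a chosen splitting sidesteps the mixing issue entirely. I would also remark that this argument is uniform in $r$, so it simultaneously handles "for some $r \geq 1$".
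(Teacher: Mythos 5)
Your proposal is correct and takes essentially the same route as the paper: the paper's necessity argument likewise realizes $\nu F$ as an invariant quotient of $\nu F^{r}$ via the surjection $f$ induced by the jet-bundle projection, transfers the invariant metric to the $g$-orthogonal complement of $\ker f$ (your ``induced metric on an invariant quotient''), and pulls back along the zero section $s_{0}$, while the sufficiency is Proposition \ref{pr0}. Your extra discussion of whether holonomy mixes the graded summands is resolved exactly as in the paper, by working with the quotient rather than a splitting.
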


Considering the induced foliation ${\cal F}_{0}^{r}$ on the slashed vector
bundle ${\cal J}_{\ast }^{r}={\cal J}^{r}\backslash \{\bar{0}\}$, then
Theorem \ref{thm:22} can not give any answer to the following question: {\em %
when is }${\cal F}${\em \ riemannian if }${\cal F}_{0}^{r}${\em \ is
riemannian for some }$r\geq 1$?

A {\em positively admissible lagrangian} on a foliated vector bundle $%
p:E\rightarrow M$ is a continuous map $L:E\rightarrow I\!\!R$ that is asked
to be differentiable at least when it is restricted to the total space of
the slashed bundle $E_{\ast }=E\backslash \{\bar{0}\}\rightarrow M$, where $%
\{\bar{0}\}$ is the image of the null section, such that the following
conditions hold: 1) $L$ is positively defined (i.e. its vertical hessian is
positively defined) and $L(x,y)\geq 0=L(x,0)$, $(\forall )x\in M$ and $y\in
E_{x}=p^{-1}(x)$; 2) $L$ is locally projectable on a transverse lagrangian $%
\bar{L}$; 3) there is a basic function $\varphi :M\rightarrow (0,\infty )$,
such that for every $x\in M$ there is $y\in E_{x}$ such that $L(x,y)=\varphi
(x)$. If a positively transverse lagrangian $F$ is $2$--homogeneous (i.e. $%
F(x,\lambda y)=\lambda ^{2}F(x,y)$, $(\forall )\lambda >0$), then $F$ is
called a {\em finslerian}; it is also a positively admissible lagrangian,
taking $\varphi \equiv 1$, or any positive constant. We can see the vertical
bundle $VTE=\ker p_{\ast }\rightarrow E$ as a vector subbundle of $\nu
F_{E}\rightarrow E$ by mean of the canonical projection $TE\rightarrow \nu
F_{E}$, since $VTE$ is transverse to $\tau F_{E}$. We say that an invariant
riemannian metric $G^{\prime }$ on $\nu F_{E}$ is {\em vertically exact} if
its restriction to the vertical foliated sections is the transverse vertical
hessian of a positively admissible lagrangian $L:E\rightarrow I\!\!R$; in
this case, we say that the foliation ${\cal F}_{E}$ is {\em vertically exact}%
. Notice that if $p:E\rightarrow M$ is an affine bundle, then the vertical
hessian ${\rm Hess}\ L$ of a lagrangian $L:E\rightarrow I\!\!R$ is a
symmetric bilinear form on the fibers of the vertical bundle $VTE$, given by
the second order derivatives of $L$, using the fiber coordinates (see \cite%
{MP09F, Sh1} for more details using coordinates).

\begin{theor}
\label{thm:22-1} Let ${\cal F}$ be a foliation on a manifold $M$ and ${\cal F%
}_{0}^{r}$ be the lifted foliation on the slashed bundle of $r$-jets of
sections of the normal bundle $\nu {\cal F}$. Then ${\cal F}_{0}^{r}$ is
riemannian and vertically exact for some $r\geq 1$ iff ${\cal F}$ is
riemannian.
\end{theor}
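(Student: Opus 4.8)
The strategy is to prove the two implications separately, using the earlier results as black boxes. For the necessity (``$\mathcal{F}$ riemannian $\Rightarrow$ $\mathcal{F}_0^r$ riemannian and vertically exact''), the plan is to start from a bundle-like metric $g$ on $\nu\mathcal{F}$ and invoke Proposition \ref{pr0 copy(1)}: it produces, for some $r\geq 1$, a canonical invariant metric on $\nu F^r$ that is vertically exact. One then restricts this metric to $\mathcal{J}_0^r=\mathcal{J}_\ast^r$ and checks that the restriction remains invariant (bundle-like) for $\mathcal{F}_0^r$ and that vertical exactness survives the restriction; the lagrangian witnessing vertical exactness is precisely the canonical lagrangian on $\mathcal{J}^r$ coming from the vertical part of the metric on $\nu F^r$, mentioned in the remark after Proposition \ref{pr0 copy(1)}. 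This direction should be essentially a matter of assembling the cited statements.

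\textbf{The sufficiency direction} is where the real work lies. Here we assume $\mathcal{F}_0^r$ is riemannian and vertically exact for some fixed $r\geq 1$, and we must conclude that $\mathcal{F}$ is riemannian. Vertical exactness gives us a positively admissible lagrangian $L:\mathcal{J}^r\to I\!\!R$ whose vertical hessian is the restriction of an invariant metric on $\nu F_{\mathcal{J}_0^r}$. The key move is to apply Proposition \ref{prop:23 copy(1)} (or rather its underlying mechanism) with $E_2=\mathcal{J}^r$ carrying this lagrangian $L$, and with $E_1$ chosen so that a foliated metric on $E_1$ feeds back to a foliated metric on $\nu\mathcal{F}$. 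The natural choice is to let $E_1=\nu\mathcal{F}$ itself, or an appropriate summand of $\mathcal{J}^r$ that projects onto $\nu\mathcal{F}$ via $\pi_{-1}^r:\mathcal{J}^r\to M$ (recall from the Introduction that $\mathcal{J}^r\nu\mathcal{F}$ is a foliated vector bundle over $M$ whose fiber filtration has associated graded pieces built from $\nu F$ and $\mathrm{Hom}((\nu F)^j,\nu F)$). We then need a metric $b$ on the pullback $q_2^\ast E_1\to E_{2\ast}$ that is transverse with respect to $\mathcal{F}_{E_{2\ast}}=\mathcal{F}_0^r$: this $b$ should be manufactured from the invariant metric on $\nu F_{\mathcal{J}_0^r}$ provided by the ``riemannian'' hypothesis on $\mathcal{F}_0^r$, restricted/projected onto the relevant subbundle. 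Once Proposition \ref{prop:23 copy(1)} applies, it yields a foliated metric on $E_1$; unwinding the identification of $E_1$ with (a piece containing) $\nu\mathcal{F}$ gives a bundle-like metric on $\nu\mathcal{F}$, hence $\mathcal{F}$ is riemannian.

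\textbf{The main obstacle} I anticipate is the bookkeeping needed to set up Proposition \ref{prop:23 copy(1)} correctly: one must identify which foliated vector bundle over $M$ should play the role of $E_1$ so that both (a) the hypothesis on $\mathcal{F}_0^r$ supplies a suitable transverse metric $b$ on $q_2^\ast E_1$, and (b) the output metric on $E_1$ genuinely restricts or projects to a metric on $\nu\mathcal{F}$. This hinges on the affine-bundle structure of the tower $\mathcal{J}^r\to\mathcal{J}^{r-1}\to\cdots\to\mathcal{J}^0=\nu\mathcal{F}\to M$ and on the fact that the vertical hessian of $L$ along the fibers of $\pi_{-1}^r$ controls, through its restriction to the sub-jets, a metric on $\nu F$. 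A secondary subtlety is checking that the lagrangian $L$, a priori only defined and smooth on the \emph{slashed} bundle, interacts correctly with the condition 3) in the definition of positively admissible lagrangian (existence of the basic function $\varphi$) — this is exactly what prevents the $\varphi$ from collapsing to something non-basic and is presumably where the vertical-exactness hypothesis, as opposed to mere ``riemannian-ness'' of $\mathcal{F}_0^r$, is indispensable; indeed the closing remark of the excerpt flags precisely whether this hypothesis can be dropped.
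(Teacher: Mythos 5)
Your route coincides with the paper's: the direction ``${\cal F}$ riemannian $\Rightarrow$ ${\cal F}_0^r$ riemannian and vertically exact'' is read off from Proposition \ref{pr0} (restriction to the slashed bundle being harmless), and the converse is obtained by extracting from vertical exactness a positively admissible lagrangian $L$ on ${\cal J}^r$ and feeding it into Proposition \ref{prop:23} --- exactly how the paper reduces Theorem \ref{thm:22-1} to the necessity part of Theorem \ref{thm:21}. (Your use of ``necessity'' and ``sufficiency'' is swapped relative to the paper's, but that is cosmetic.)

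The problem is that the ``bookkeeping obstacle'' you defer --- which bundle plays the role of $E_1$ and how the transverse metric $b$ is built --- is the only nontrivial step of the whole argument, and your proposal does not resolve it. The paper takes $E_1=\nu^{\ast}{\cal F}$ (not $\nu{\cal F}$ itself, nor a summand of ${\cal J}^r$), and $b$ is not obtained by merely restricting or projecting the invariant metric onto a subbundle of $\nu F_{{\cal J}_0^r}$. One uses the affine structure of $\pi_{r-1}^{r}:{\cal J}^{r}\rightarrow {\cal J}^{r-1}$ to identify $\ker (\pi _{r-1}^{r})_{\ast }\simeq (\nu ^{\ast }{\cal F})^{r}\otimes E$ inside the vertical bundle on which ${\rm Hess}\,L$ lives, and then sets
\[
b_{(m,[s])}(\lambda )=\left( {\rm Hess}_{[s]}L\right) \left( \lambda
^{r}\otimes \pi _{0}^{r}([s])\right) ,\qquad \lambda ^{r}=\lambda \otimes
\cdots \otimes \lambda .
\]
The point is that a quadratic form on ${\rm Hom}((\nu F)^{r},E)$ does not restrict to a quadratic form on $\nu ^{\ast }{\cal F}$; it has to be evaluated on the decomposable elements $\lambda ^{r}\otimes \pi _{0}^{r}([s])$, and it is precisely this dependence on the base point $[s]$ that makes the averaging over $B_{m}$ in Proposition \ref{prop:23} necessary. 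Your plan correctly locates where the work is, but stops exactly where the proof begins; as written it would not compile into an argument without supplying this construction.
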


In particular, it follows that any invariant metric $g$ on $\nu F$ gives
rise to a canonical lagrangian on ${\cal J}^{r}$, coming from the vertical
part of the vertically exact invariant riemannian metric on $\nu F^{r}$. So,
it is natural to ask for the converse: does the existence of a lagrangian on 
${\cal J}^{r}$ guaranties that ${\cal F}$ is riemannian?

\begin{theor}
\label{thm:21} Let $p:E\rightarrow M$ be a foliated vector bundle over a
foliated manifold $(M,{\cal F})$. There is a positively admissible
lagrangian on ${\cal J}^{r}E$ for some $r\geq 1$ iff the foliation ${\cal F}$
is riemannian.
\end{theor}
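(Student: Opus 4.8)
The plan is to prove the two implications separately; the substantive one is necessity, for which the corollary of Proposition~\ref{prop:23 copy(1)} --- a positively admissible lagrangian on a foliated vector bundle induces a foliated metric on that bundle --- is the decisive tool, exactly as for the necessity half of Theorem~\ref{thm:22-1}, while sufficiency is a prolongation argument of the kind behind Proposition~\ref{pr0 copy(1)}. Throughout I use that $\pi^{r}_{-1}:{\cal J}^{r}E\to M$ is again a foliated vector bundle, sitting atop the canonical tower of foliated bundle projections $\pi^{j}_{j-1}:{\cal J}^{j}E\to{\cal J}^{j-1}E$ ($1\le j\le r$), which for a vector bundle are surjective and linear with kernel the foliated tensor bundle ${\rm Hom}((\nu F)^{j},E)$; in particular ${\cal J}^{0}E=E$ and ${\cal J}^{1}E$ are foliated quotients of ${\cal J}^{r}E$, and ${\rm Hom}(\nu F,E)=\nu^{\ast}F\otimes E$ is the kernel of ${\cal J}^{1}E\to{\cal J}^{0}E$.

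\emph{Necessity.} Suppose that, for some $r\ge1$, there is a positively admissible lagrangian on ${\cal J}^{r}E$. Since ${\cal J}^{r}E\to M$ is a foliated vector bundle, the corollary of Proposition~\ref{prop:23 copy(1)} applied to it provides, at once, a foliated metric $G$ on ${\cal J}^{r}E$. It remains to push $G$ down to $\nu F$. A foliated metric restricts to a foliated metric on every foliated subbundle and, via orthogonal complements, descends to every foliated quotient bundle, hence is inherited by every foliated subquotient; running this along the tower above, one gets a foliated metric $g_{E}$ on $E$ and a foliated metric $h$ on ${\rm Hom}(\nu F,E)=\nu^{\ast}F\otimes E$. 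Contracting the second against the first produces a metric on $\nu^{\ast}F$: for $\alpha,\beta$ in a fibre of $\nu^{\ast}F$ set $\langle\alpha,\beta\rangle=\sum_{a}h(\alpha\otimes e_{a},\beta\otimes e_{a})$, where $\{e_{a}\}$ is a $g_{E}$-orthonormal basis of the corresponding fibre of $E$. This is independent of the chosen orthonormal basis, positive definite, and holonomy invariant (since $g_{E}$ and $h$ are), so it is a foliated metric on $\nu^{\ast}F$, equivalently on $\nu F$; hence ${\cal F}$ is riemannian.

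\emph{Sufficiency.} Conversely, assume ${\cal F}$ riemannian and fix an invariant metric $g$ on $\nu F$. For $E=\nu F$ this is precisely Proposition~\ref{pr0 copy(1)}: $g$ prolongs, for a suitable $r$, to an invariant vertically exact metric on $\nu F^{r}$, whose vertical part is by definition the vertical Hessian of a positively admissible lagrangian on ${\cal J}^{r}={\cal J}^{r}\nu F$. For an arbitrary foliated vector bundle $E$ one proceeds analogously, prolonging $g$ --- together with the invariant metrics it induces, by the usual tensor operations, on the director bundles ${\rm Hom}((\nu F)^{j},E)$ of the tower --- to an invariant vertically exact metric on the normal bundle of ${\cal F}_{E}^{r}$ for $r$ large enough; its vertical part is the vertical Hessian of the required positively admissible lagrangian on ${\cal J}^{r}E$.

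I expect the descent in the necessity part to be the genuine obstacle: the corollary of Proposition~\ref{prop:23 copy(1)} hands us a metric only on the large bundle ${\cal J}^{r}E$, and converting it into one on $\nu F$ must be arranged by hand. What makes this go through is the rigid functorial structure of the jet tower --- each step a foliated (affine; in the vector case, linear) bundle projection with an explicit foliated tensor director bundle --- together with the elementary stability of metric-invariance under passage to foliated sub- and quotient bundles; the only mildly computational point, the orthonormal-frame independence of the final contraction, is routine.
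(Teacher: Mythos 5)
Your necessity argument is correct but takes a genuinely different route from the paper's. The paper does \emph{not} first produce a foliated metric on all of ${\cal J}^{r}E$: it applies Proposition \ref{prop:23} once, with $E_{1}=\nu ^{\ast }{\cal F}$ and $E_{2}={\cal J}^{r}E$, taking for $b$ the form $b_{(m,[s])}(\lambda )=({\rm Hess}_{[s]}L)(\lambda ^{r}\otimes \pi _{0}^{r}([s]))$ obtained by restricting the vertical hessian of $L$ to the top director bundle $\ker (\pi _{r-1}^{r})_{\ast }\simeq (\nu ^{\ast }{\cal F})^{r}\otimes E$ and inserting the distinguished element $\lambda ^{r}\otimes \pi _{0}^{r}([s])$. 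You instead invoke the Corollary (itself Proposition \ref{prop:23} with $E_{1}=E_{2}={\cal J}^{r}E$) to get a foliated metric on the whole jet bundle, then descend along the exact sequences $0\rightarrow {\rm Hom}((\nu F)^{j},E)\rightarrow {\cal J}^{j}E\rightarrow {\cal J}^{j-1}E\rightarrow 0$ by the subbundle/orthogonal-complement mechanism (the same device the paper uses to prove Theorem \ref{thm:22}), and finish with a partial trace. Your version even buys something: it stays entirely within quadratic forms and never leaves the category of foliated metrics, whereas the paper's $\lambda \mapsto \lambda ^{r}\otimes \pi _{0}^{r}([s])$ is homogeneous of degree $r$ in $\lambda $ (so the resulting $b$ is of degree $2r$ for $r\geq 2$) and degenerates at jets with vanishing $0$-jet, which do occur on the slashed bundle. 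The basis-independence and positive definiteness of your trace are indeed routine, as you say.

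The sufficiency half, however, has a genuine gap for general $E$. Proposition \ref{pr0} starts its induction from an invariant metric on $\nu F$ and produces the lagrangian only on ${\cal J}^{r}={\cal J}^{r}\nu {\cal F}$. Your claim that $g$ induces, ``by the usual tensor operations,'' invariant metrics on the director bundles ${\rm Hom}((\nu F)^{j},E)$ is false as stated: these bundles involve $E$ as a tensor factor, and $g$ gives you nothing on that factor. What is actually needed to run the analogue of Proposition \ref{pr0} is an invariant metric on $E$ itself, and such a metric need not exist even over a riemannian foliation: over a linear foliation of the torus with irrational slope, the foliated line bundle whose holonomy generator scales fibres by $2$ admits no invariant metric, and then --- by your own necessity argument --- ${\cal J}^{r}E$ carries no positively admissible lagrangian for any $r$. (To be fair, the paper's one-line appeal to Proposition \ref{pr0} for the sufficiency of Theorem \ref{thm:21} suffers from the same problem; but your write-up asserts a specific construction that does not work, rather than leaving the point open.)
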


\section{Proof of the main results}

\begin{proof}[Proof of Theorem \protect\ref{thm:22}]
The sufficiency is given below by Proposition \ref{pr0}. We prove the
necessity. By construction, the tangent plane field to ${\cal F}^{r}$ is
sent to $\tau F$ by $(\pi _{-1}^{r-1})_{\ast }$. So, in particular, $(\pi
_{-1}^{r-1})_{\ast }$ induces a surjective map $f:\nu F^{r}\rightarrow \nu F$%
. More precisely, for each $m\in M$ and $(m,\lambda )\in {\cal J}^{r}$, $f$
is surjective from $(\nu F^{r})_{(m,\lambda )}$ to $(\nu F)_{m}$. We know by
assumption there exists a (holonomy) invariant metric $g$ on $\nu F^{r}$.
Let $HF^{r}$ denote the $g$-orthogonal of $\ker f$. Because $\nu F^{r}=\ker
f\oplus HF^{r}$ and $f$ is surjective, we have, for all $(m,\lambda )$ as
above, $(HF^{r})_{(m,\lambda )}\simeq (\nu F)_{m}$. This can be reformulated
as $HF^{r}\simeq (\pi _{-1}^{r-1})^{\ast }\nu F$. Recall that the elements
of $({\cal J}^{r})_{m}$ are equivalence classes of foliated sections of $\nu 
{\cal F}$ defined near $m$. Therefore, for each $m$ one can consider the
equivalence class of the zero section of $\nu F$. We denote by $%
s_{0}:M\rightarrow {\cal J}^{r}$ the corresponding section. We have $\pi
_{-1}^{r-1}\circ s_{0}=Id_{M}$ so that $\nu F=(\pi _{-1}^{r-1}\circ
s_{0})^{\ast }\nu F=s_{0}^{\ast }((\pi _{-1}^{r-1})^{\ast }\nu
F)=s_{0}^{\ast }HF^{r}$. So the metric $g$ restricted to $HF^{r}$ gives a
holonomy invariant metric on $\nu F$.
\end{proof}

Each sufficiency of Theorems \ref{thm:22}, \ref{thm:22-1} and \ref{thm:21}
is implied by the following result.

\begin{prop}
\label{pr0}Any invariant metric $g$ on $\nu F$ gives a canonical vertically
exact invariant riemannian metric on $\nu F^{r}$, for any $r\geq 1$.
\end{prop}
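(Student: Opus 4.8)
The plan is to build the metric on $\nu F^{r}$ in two pieces, matching the splitting $\nu F^{r}=\ker f\oplus HF^{r}$ that already appeared in the proof of Theorem \ref{thm:22}, and then to check that the $\ker f$-piece can be arranged to be a vertical hessian. First I would recall that the fibration $\pi_{r-1}^{r}:{\cal J}^{r}\to{\cal J}^{r-1}$ is affine with director bundle ${\rm Hom}((\nu F)^{r},\nu F)$, so that iterating gives a canonical isomorphism between $\ker f$ (the vertical bundle of $\pi_{-1}^{r-1}:{\cal J}^{r}\to M$ along the fibres, viewed inside $\nu F^{r}$) and the pullback to ${\cal J}^{r}$ of a bundle built functorially out of $\nu F$ and its symmetric powers, namely $\bigoplus_{j=1}^{r}{\rm Hom}(S^{j}\nu F,\nu F)$. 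Since $g$ is an invariant (holonomy-invariant, hence also transversally projectable) metric on $\nu F$, it induces canonically an invariant metric on each of these tensorial bundles, and the direct sum of these is an invariant metric $g_{\rm vert}$ on $\ker f$. On the complementary summand $HF^{r}\simeq(\pi_{-1}^{r-1})^{\ast}\nu F$ I would simply put the pullback metric $(\pi_{-1}^{r-1})^{\ast}g$, which is invariant because $g$ is and because $\pi_{-1}^{r-1}$ is a foliated map. Declaring the two summands orthogonal produces an invariant riemannian metric $G$ on $\nu F^{r}$.

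Next I would exhibit the lagrangian witnessing vertical exactness. The vertical bundle $VT{\cal J}^{r}=\ker(\pi_{-1}^{r-1})_{\ast}\to{\cal J}^{r}$, viewed as a subbundle of $\nu F^{r}$ via $T{\cal J}^{r}\to\nu F^{r}$, is exactly $\ker f$, so what must be checked is that $g_{\rm vert}$, restricted to the vertical foliated sections, is the vertical hessian of a positively admissible lagrangian $L:{\cal J}^{r}\to I\!\!R$. Because the fibres of $\pi_{-1}^{r-1}$ are affine (indeed vector) spaces and $g_{\rm vert}$ is, in the natural fibre coordinates coming from the jet construction, a \emph{constant} positive-definite symmetric bilinear form in the fibre directions (its coefficients are built from $g$ and are basic, hence independent of the fibre coordinates), the obvious candidate is the associated fibrewise quadratic form: in jet coordinates $(x^{a},y^{A},y^{A}_{i},\dots)$ set $L$ equal to $G$ evaluated on the Liouville-type vertical vector field, i.e. the fibrewise norm-squared $\tfrac12\,G_{\rm vert}\big((\text{fibre coordinates}),(\text{fibre coordinates})\big)$. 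Its second fibre derivatives reproduce $g_{\rm vert}$, it is non-negative, vanishes exactly on the zero section $s_{0}$, and it is $2$-homogeneous along the fibres, hence in fact a finslerian on the vector bundle $\pi_{-1}^{r-1}:{\cal J}^{r}\to M$; taking $\varphi\equiv 1$ verifies condition 3, and condition 2 (local projectability onto a transverse lagrangian) holds because the coefficients of $L$ are basic. Thus $G$ is vertically exact.

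The step I expect to require the most care is the first one: verifying that the iterated affine structure really does supply a \emph{canonical, foliation-respecting} identification of $\ker f$ with a functorial tensor bundle over $\nu F$, so that an invariant metric $g$ on $\nu F$ transports to an invariant metric on $\ker f$ without choices. One has to be careful that ${\cal J}^{r}\nu{\cal F}$ is only a subbundle of the iterated jet bundle ${\cal J}^{1}\cdots{\cal J}^{1}\nu{\cal F}$, so the relevant director bundles are the symmetrized pieces ${\rm Hom}(S^{j}\nu F,\nu F)$ rather than the full iterated Hom bundles; this is exactly the classical description of jet bundles (as in \cite{GMS}) and poses no conceptual difficulty, but it is where the bookkeeping lives. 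Everything else — invariance of induced metrics on tensor bundles, the hessian computation, and the homogeneity and vanishing properties of $L$ — is routine once that identification is in place, and the conclusion holds for every $r\geq 1$, as claimed.
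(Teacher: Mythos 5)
Your overall strategy is the same as the paper's: split $\nu F^{r}$ into the vertical part $\ker f$ and a horizontal complement, pull $g$ back on the horizontal factor, put a $g$-induced fibre metric on the vertical factor, and observe that a fibrewise-constant vertical metric is the hessian of the associated quadratic form, hence a finslerian and so positively admissible. That last step (vertical exactness via the norm-squared lagrangian, $\varphi\equiv 1$) is correct and matches the paper.

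The gap is in the step you yourself flag as delicate, and it is more than bookkeeping. The iterated affine structure of $\pi _{r-1}^{r}:{\cal J}^{r}\rightarrow {\cal J}^{r-1}$ endows $\ker f\simeq (\pi _{-1}^{r})^{\ast }{\cal J}^{r}$ with a canonical \emph{filtration} whose graded pieces are ${\rm Hom}(S^{j}\nu F,\nu F)$ for $j=0,\dots ,r$ (your sum also omits the $j=0$ piece $\nu F$), but it does \emph{not} give a canonical direct-sum decomposition: splitting $0\rightarrow {\rm Hom}(S^{j}\nu F,\nu F)\rightarrow {\cal J}^{j}\rightarrow {\cal J}^{j-1}\rightarrow 0$ is exactly the choice of a linear connection, and none is canonical from the foliated structure alone. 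Likewise your horizontal complement $HF^{r}$ is undefined here: in the proof of Theorem \ref{thm:22} it was the orthogonal complement with respect to an \emph{assumed} invariant metric on $\nu F^{r}$, which is precisely what Proposition \ref{pr0} is supposed to produce. Both choices must moreover be holonomy-invariant for the resulting metric to be invariant, and you do not verify this. The paper supplies exactly the missing ingredient by induction: it takes the Levi-Civita connection $\nabla $ of $g$, sets $g^{1}([s_{1}],[s_{2}])=g(s_{1},s_{2})+\bar{g}(\nabla s_{1},\nabla s_{2})$ and $D_{X}^{1}[s]=[\nabla _{X}s]$, and at each stage uses the invariant connection $D^{r}$ both to define $g^{r+1}$ on ${\cal J}^{r+1}\subset J^{1}{\cal J}^{r}$ (via orthogonal projection) and to provide the invariant vertical/horizontal splitting of $\nu F^{r+1}$. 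If you replace ``canonical isomorphism coming from the affine structure'' by ``splitting induced by the invariant connection built from $g$'' and check invariance of that splitting, your argument closes; as written, the metric on $\ker f$ and the complement $HF^{r}$ are not actually defined.
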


\begin{proof}
We proceed by induction over $r\geq 1$. If $\nabla $ is the Levi-Civita
connection of the invariant metric $g$ on $\nu F$ and $\bar{g}$ is the
induced metric tensor on ${\rm End}(\nu F)={\rm Hom}(\nu F,\nu F)$, then we
can consider the invariant metric $g^{1}([s_{1}],[s_{2}])=g(s_{1},s_{2})+%
\bar{g}(\nabla s_{1},\nabla s_{2})$ and the invariant linear connection $%
D_{X}^{1}[s]=[\nabla _{X}s]$ on the foliated vector bundle ${\cal J}%
^{1}\rightarrow M$. Using the decomposition $\nu F^{1}=V\nu {\cal F}%
^{1}\oplus H\nu {\cal F}^{1}$ given by the linear connection $D^{1}$ and the
isomorphisms $V\nu F\cong p^{\ast }\nu F$, $H\nu F\cong p^{\ast }\nu F$, we
consider the metric $G^{1}=p^{\ast }g\oplus p^{\ast }g$ on $\nu F^{1}$.

Let us assume that a riemannian metric $g^{r}$ and a linear connection $%
D^{r} $ have been constructed on the fibers of the vector bundle ${\cal J}%
^{r}\rightarrow M$, for $r\geq 1$. Let us consider the induced metric tensor 
$\bar{g}^{r}$ on ${\rm Hom}(\nu F,{\cal J}^{r})$. The formulas $\tilde{g}%
^{r}([s_{1}],[s_{2}])=g^{r}(s_{1},s_{2})+\bar{g}^{r}(\nabla s_{1},\nabla
s_{2})$ and $\bar{D}_{X}^{1}[s]=[\nabla _{X}s]$ define an invariant metric
and a linear connection respectively on the vector bundle $J^{1}{\cal J}%
^{r}\rightarrow M$. Now, on the vector subbundle ${\cal J}^{r+1}\subset J^{1}%
{\cal J}^{r}$, we consider the induced metric $g^{r+1}$ and the invariant
linear connection $D_{X}^{r}[s]=p^{\prime }(\bar{D}_{X}^{1}[s])$, where $%
p^{\prime }:J^{1}{\cal J}^{r}\rightarrow {\cal J}^{r+1}$ is the orthogonal
projection. Using the decomposition $\nu F^{r+1}=V\nu {\cal F}^{r+1}\oplus
H\nu {\cal F}^{r+1}$ given by the linear connection $D^{r+1}$ and the
isomorphisms $V\nu F^{r+1}\cong p^{\ast }\nu F^{r+1}$, $H\nu F^{r+1}\cong
p^{\ast }\nu F$, we consider the invariant metric $G^{r+1}=p^{\ast
}g^{r+1}\oplus p^{\ast }g$ on $\nu F^{r+1}$ that is vertically exact.
\end{proof}

The main technical tool to prove the necessity of each Theorems \ref%
{thm:22-1} and \ref{thm:21} has independent interest, as follows.

\begin{prop}
\label{prop:23} Let $p_{1}:E_{1}\rightarrow M$ and $p_{2}:E_{2}\rightarrow M$
be foliated vector bundles over a foliated manifold $(M,{\cal F})$ and $%
q_{2}:E_{2\ast }\rightarrow M$ be the slashed bundle. If there are a
positively admissible lagrangian $L:E_{2}\rightarrow I\!\!R$ and a metric $b$
on the pull back bundle $q_{2}^{\ast }E_{1}\rightarrow E_{2\ast }$, foliated
with respect to ${\cal F}_{E_{2\ast }}$, then there is a foliated metric on $%
E_{1}$, with respect to ${\cal F}$.
\end{prop}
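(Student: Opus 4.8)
The plan is to produce a foliated (holonomy‑invariant) metric on $E_1$ by averaging the given metric $b$ on $q_2^\ast E_1$ over the fibers of $E_{2\ast}\to M$, using the lagrangian $L$ to cut out a compact ``unit level set'' on which to integrate. First I would use condition 3) in the definition of a positively admissible lagrangian: the basic function $\varphi:M\to(0,\infty)$ gives, for each $x\in M$, the level set $\Sigma_x=\{y\in E_{2,x}: L(x,y)=\varphi(x)\}$, which is nonempty by hypothesis; since $L$ is positively defined (vertical hessian positive definite) and $L(x,0)=0<\varphi(x)$, the set $\Sigma_x$ is a compact hypersurface in the fiber $E_{2,x}$ bounding the sublevel region, and these fit together into a fiber bundle $\Sigma\to M$ sitting inside $E_{2\ast}$. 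Because $L$ is locally projectable on a transverse lagrangian $\bar L$ and $\varphi$ is basic, the submanifold $\Sigma\subset E_{2\ast}$ is saturated for ${\cal F}_{E_{2\ast}}$ in the sense that holonomy of ${\cal F}$ maps $\Sigma_x$ to $\Sigma_{x'}$; thus $\Sigma$ carries the restricted foliation and $\Sigma\to M$ is a foliated fiber bundle with compact fibers.

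Next I would fix a leafwise‑invariant volume density on the fibers $\Sigma_x$. The positive‑definite vertical hessian $\operatorname{Hess} L$ is itself an invariant (transverse) metric on the vertical bundle of $E_{2\ast}\to M$ restricted to $\Sigma$, so it induces a Riemannian volume form $d\mu_x$ on each compact fiber $\Sigma_x$ that is preserved (up to the local projectability of $L$) by the holonomy pseudogroup. Then, given a point $e_1\in E_1$ over $x$ and tangent vectors, I would set
\[
h_x(e_1)=\frac{1}{\operatorname{vol}(\Sigma_x)}\int_{\Sigma_x} b_{(x,y)}\big(\text{pullback of }e_1\big)\, d\mu_x(y),
\]
where $b_{(x,y)}$ is the metric on the fiber $(q_2^\ast E_1)_{(x,y)}\cong (E_1)_x$; since that fiber is canonically $(E_1)_x$ for every $y$, this integral makes sense fiberwise and defines a genuine metric $h$ on $E_1$ (positive definiteness is inherited from $b$).

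Finally I would check that $h$ is foliated with respect to ${\cal F}$. A holonomy transformation $\gamma$ of ${\cal F}$ lifts to a holonomy transformation of ${\cal F}_{E_{2\ast}}$ carrying $\Sigma_x$ to $\Sigma_{x'}$ and, by the invariance of $\operatorname{Hess} L$ coming from local projectability of $L$, carrying $d\mu_x$ to $d\mu_{x'}$; it also lifts to ${\cal F}_{E_1}$ and, since $b$ is foliated with respect to ${\cal F}_{E_{2\ast}}$, pulls $b$ back to $b$. Hence the defining integral for $h_{x'}$ pulls back to that for $h_x$, i.e. $h$ is holonomy‑invariant, which is exactly the assertion that $E_1$ admits a foliated metric. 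The main obstacle I expect is the bookkeeping of invariance: one must verify carefully that the ``local projectability'' hypotheses on $L$ and $b$ are strong enough to make both the level sets $\Sigma_x$ and the volume densities $d\mu_x$ behave equivariantly under the holonomy pseudogroup — i.e. that the normalization $1/\operatorname{vol}(\Sigma_x)$ is itself a basic function — rather than the construction of the averaged tensor itself, which is routine once the invariant fiberwise measure is in hand.
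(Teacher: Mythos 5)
Your proposal is correct and rests on the same central idea as the paper's proof: cut out a compact, holonomy-invariant region in each fiber of $E_{2\ast}\rightarrow M$ using $L$ and the basic function $\varphi$, and average the quadratic forms $b_{(x,y)}$ over it (metrics form a convex cone, so the average is again a metric). The difference is in the choice of domain and measure. The paper integrates over the solid shell $B_{m}=\{e_{2}:\frac{1}{2}\varphi(m)\leq L(e_{2})\leq\varphi(m)\}$ against Lebesgue measure on the vector-space fiber, normalized by ${\rm Leb}(B_{m})$; since the holonomy acts by linear maps on the fibers of the foliated vector bundle, it sends Lebesgue measure to a constant multiple of itself, and that constant cancels against the normalization, so invariance is essentially automatic and no regularity of level sets is needed ($B_m$ only has to be measurable of finite nonzero measure). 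You instead integrate over the level hypersurface $\Sigma_{x}=\{L=\varphi(x)\}$ against the Riemannian volume induced by ${\rm Hess}\,L$. This also works, but it obliges you to verify the extra points you yourself flag: that $\Sigma_{x}$ is a smooth compact hypersurface (this does hold --- positive definiteness of the vertical hessian on the slashed fiber makes $L$ fiberwise strictly convex with minimum $0$ at the origin, so $\varphi(x)>0$ is a regular value and the sublevel set contains no ray, hence is compact), and that $d\mu_{x}$ is carried to $d\mu_{x'}$ by holonomy (this follows because the linear holonomy maps preserve $L$, hence its second fiber derivatives); with an invariant surface measure the normalization by ${\rm vol}(\Sigma_x)$ is then not even needed. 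In short, your route is sound but pays for its geometric elegance with regularity and equivariance checks that the paper's normalized-Lebesgue trick deliberately avoids.
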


\begin{proof}
For each $(m,e_{2})\in E_{2\ast }$ we have a metric (here seen as a
quadratic form) $b_{(m,e_{2})}:(E_{1})_{(m,e_{2})}\rightarrow {\Bbb R}$. We
want a metric $\bar{b}_{m}:(E_{1})_{m}\rightarrow {\Bbb R}$. The idea is to
integrate the dependency on $e_{2}$, using the fact that metrics form a
convex set in the space of quadratic forms. We set 
\[
B_{m}=\{e_{2}\in (E_{2})_{m}\;;\frac{1}{2}\varphi (m)\leq L(e_{2})\leq
\varphi (m)\}. 
\]%
The assumptions on $L$ guaranty that each $B_{m}$ has finite and non zero
measure with respect to any Lebesgue measure ${\rm Leb}$ on $(E_{2})_{m}$.
Indeed $B_{m}$ has to be proper because it is convex and vanishes at the
origin. So $B_{m}$ is compact and non empty because $\varphi (m)$ is in the
image of $B_{m}$, by assumption. The interior of $B_{m}$ is non-void because
of conditions on $L$. We now set 
\[
\bar{b}_{m}=\frac{1}{{\rm Leb}(B_{m})}\int_{B_{m}}b_{(m,e_{2})}\,d{\rm Leb}%
(e_{2}). 
\]%
Note that there is a unique Lebesgue measure on a real vector space up to
multiplicative constant and this indeterminacy is absorbed when we divide by 
${\rm Leb}(B_{m})$.
\end{proof}

Before using this Proposition to prove Theorems \ref{thm:22-1} and \ref%
{thm:21}, we state as a corollary the case when $E_{1}=E_{2}=E$ and $b$ is
the hessian of a positively admissible lagrangian on $E$, seen as a metric
on $p^{\ast }E_{\ast }\rightarrow E$ for some foliated bundle $%
p:E\rightarrow M$.

\begin{cor}
Let $p:E\rightarrow M$ be a foliated vector bundle over a foliated manifold $%
(M,{\cal F})$. If $L:E\rightarrow I\!\!R$ is a positively admissible
lagrangian, then there is a foliated metric on $E$.
\end{cor}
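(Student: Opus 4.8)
The plan is to deduce this Corollary from Proposition~\ref{prop:23} by the substitution $E_1=E_2=E$ and by exhibiting a suitable foliated metric $b$ on the pull-back bundle $q^{\ast}E_{\ast}\rightarrow E_{\ast}$. First I would observe that a positively admissible lagrangian $L:E\rightarrow I\!\!R$ has, on the slashed bundle $E_{\ast}$, a well-defined vertical hessian ${\rm Hess}\,L$, which by hypothesis~(1) is positively definite, hence a genuine fibre metric on the vertical bundle $VTE\rightarrow E_{\ast}$. The key point is the canonical identification $VTE\mid_{(m,e)}\cong E_m=(q^{\ast}E)_{(m,e)}$ valid for a vector bundle, under which ${\rm Hess}\,L$ becomes precisely a metric $b$ on $q^{\ast}E_{\ast}\rightarrow E_{\ast}$ (here one restricts attention to $E_{\ast}$, where differentiability of $L$ is guaranteed).

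Next I would check that this $b$ is foliated with respect to ${\cal F}_{E_{\ast}}$, which is exactly what condition~(2) furnishes: since $L$ is locally projectable onto a transverse lagrangian $\bar L$, its vertical hessian is locally projectable as well, so ${\rm Hess}\,L$ descends to the transverse vertical bundle and is invariant under the holonomy of ${\cal F}_{E_{\ast}}$. Thus $b$ is an invariant (foliated) metric on $q^{\ast}E_{\ast}\rightarrow E_{\ast}$ in the sense required by Proposition~\ref{prop:23}. With $L$ itself playing the role of the positively admissible lagrangian $L:E_{2}\rightarrow I\!\!R$ in that Proposition, all hypotheses are met.

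Finally I would simply invoke Proposition~\ref{prop:23}: averaging $b_{(m,e)}$ over the compact convex shell $B_m=\{e\in E_m:\tfrac12\varphi(m)\le L(e)\le\varphi(m)\}$ against the (normalized) Lebesgue measure produces a metric $\bar b_m$ on $E_m=(E_1)_m$, and the conclusion of that Proposition is that the resulting family $\{\bar b_m\}$ is a foliated metric on $E$ with respect to ${\cal F}$. This finishes the proof.

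The step I expect to require the most care is verifying that the vertical hessian of $L$, a priori only defined along the fibres of $p$, genuinely yields a \emph{transverse} (holonomy-invariant) metric on the pull-back bundle rather than merely a fibrewise one: one must use that local projectability in condition~(2) is compatible with taking second vertical derivatives, so that the passage $L\mapsto{\rm Hess}\,L\mapsto b$ commutes with the foliated structure. Once this compatibility is in place, everything else is a direct specialization of the already-proved Proposition~\ref{prop:23}, and no further estimates are needed.
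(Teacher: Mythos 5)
Your proposal is correct and follows essentially the same route as the paper: the Corollary is obtained by specializing Proposition~\ref{prop:23} to $E_{1}=E_{2}=E$ with $b$ the vertical hessian of $L$, viewed via the canonical identification $VTE\cong p^{\ast}E$ as a metric on the pull-back bundle over $E_{\ast}$, whose holonomy invariance comes from the local projectability of $L$. The extra care you take in checking that the hessian is genuinely foliated is a detail the paper leaves implicit, but it is the intended argument.
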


Specializing further to the case $E=\nu {\cal F}$ and $L$ is a foliated
Finsler metric we get back that any foliation having an invariant transverse
Finsler structure is riemannian (the problem is proposed in \cite{MiMo} and
is a special case of a problem presented by E. Ghys in Appendix E of P.
Molino's book \cite{Mo}; see \cite{MiMo, JoWo, MP09F}). Another interesting
special case is when $E=\nu ^{\ast }F$, specially concerning the duality
lagrangian-hamiltonian. Finally, we return to Theorems \ref{thm:22-1} and %
\ref{thm:21}.

\begin{proof}[Proof of Theorems \protect\ref{thm:22-1} and \protect\ref%
{thm:21} ]
The suffiency for both Theorems follow by Proposition \ref{pr0}. We prove
first the necessity of Theorem \ref{thm:21}. Thanks to Proposition \ref%
{prop:23} with $E_{1}=\nu ^{\ast }{\cal F}$ and $E_{2}={\cal J}^{r}E$, it
suffices to construct a metric on $({\cal \pi }_{-1}^{r})_{0}^{\ast }(\nu
^{\ast }{\cal F})$ (again we won't use anything near the zero section of $%
{\cal J}^{r}E$) which is foliated with respect to ${\cal F}^{r}$. At every $%
[s]\in {\cal J}^{r}E_{(0)}$ we have ${\rm Hess}_{[s]}L$ which is a metric on
the vertical part $\ker (\pi _{0}^{r})_{\ast }$ of the tangent bundle of $%
{\cal J}^{r}E$. This vertical part contains $\ker (\pi _{r-1}^{r})_{\ast }$
since $\pi _{0}^{r}=\pi _{0}^{r-1}\circ \pi _{r-1}^{r}$, where $\pi
_{0}^{0}=p:E\rightarrow M$, ${\cal J}^{0}=E$. The vector bundle $\ker (\pi
_{r-1}^{r})_{\ast }$ is associated with the affine bundle $\pi _{r-1}^{r}:%
{\cal J}^{r}\rightarrow {\cal J}^{r-1}$, thus $\ker (\pi _{r-1}^{r})_{\ast
}\simeq (\nu ^{\ast }{{\cal F})}^{r}\otimes E$. So it makes sense to set,
for any $\lambda \in \nu _{m}^{\ast }{\cal F}$, $b_{(m,[s])}(\lambda
)=\left( {\rm Hess}_{[s]}L\right) (\lambda ^{r}\otimes \pi _{0}^{r}([s]))$,
where $b$ and the vertical hessian are seen as quadratic forms and $\lambda
^{r}=\lambda \otimes \cdots \otimes \lambda $ ($r$ times). Thus the
necessity of Theorem \ref{thm:21} follows. Finally, the necessity of Theorem %
\ref{thm:22-1} follows thanks to Theorem \ref{thm:21} using the lagrangian
on ${\cal J}^{r}$ given by the vertical part of the vertically exact
invariant riemannian metric on $\nu F^{r}$.
\end{proof}

Finally, the following question arises: {\em can we drop in Theorem \ref%
{thm:22-1} the condition that }${\cal F}_{0}^{r}${\em \ be vertically exact?}

{\bf Acknowledgements:} The authors thank the referee for the valuable
improvements given to the final form of the Note. The author P.P. was partially supported 
by a CNCSIS Grant, cod. 536/2008, contr. 695/2009.

\end{document}